\newtheorem{thm}{Theorem}[section]
\newtheorem{cor}[thm]{Corollary}
\newtheorem{prop}[thm]{Proposition}
\newtheorem{lem}[thm]{Lemma}
\numberwithin{equation}{section}
\newcommand{\om}{\omega}
\newcommand{\ot}{\otimes}
\newcommand{\lan}{\langle}
\newcommand{\ran}{\rangle}
\newcommand{\eps}{\varepsilon}
\newcommand{\vs}{\vskip 3pc}
\newcommand{\ds}{\displaystyle}
\def\CC{{\mathcal C}}
\def\CY{{\mathcal Y}}
\def\CD{{\mathcal D}}
\def\BN{{\mathbb N}}
\def\Bk{{\Bbbk}}
\newcommand{\num}{\refstepcounter{thm}}
\begin{document}

\title{A $q$-Identity Related to a Comodule}
\dedicatory{Dedicated to Mia Cohen, coauthor and friend, on the occasion of her retirement}
\author{A. Jedwab} 
\address{University of Southern California, Los Angeles, CA90089-1113}
\email{jedwab@usc.edu}
\author{S. Montgomery}
\address{University of Southern California, Los Angeles, CA 90089-1113}
\email{smontgom@math.usc.edu}
\thanks{Both authors were supported by NSF grant DMS 07-01291}

\maketitle
\setcounter{section}{0}

\section{Introduction} 

In this paper we show that a certain algebra being a comodule algebra over the Taft Hopf algebra 
of dimension $n^2$ is equivalent to a set of identities related to the $q$-binomial coefficient, 
when $q$ is a primitive $n^{th}$ root of 1. We then give a direct  combinatorial proof of these identities. 
To be consistent with the usual notation for the Taft algebra, 
we will write $q = \om$ for our $n^{th}$ root of 1.

Let $\Bk$ be a field of characteristic 0 which contains a primitive $n^{th}$ root of 1, $\om$. 
Consider the algebra $A=A_n(\om) = \Bk[z]/(z^n-\om)$. It was proposed by Cohen, Fischman, and the second author \cite{CFM} that $A$ is a right 
$H$-comodule for the Taft Hopf algebra $H = T_{n^2}(\om)$ of dimension $n^2$, for a particular map 
$\rho: A \to A \ot H$.  \cite[Proposition 2.2(d)]{CFM} proved that $\rho$ is a comodule map when 
$n \leq 4$. However the question for general $n$ was left open, since the general case 
seemed to lead to some rather complicated identities. 

The comodule problem was later solved for arbitrary $n$ in \cite{MS} by indirect means: 
it was shown there that $A$ is a module for the Drinfel'd  double $D(H)$, giving an action 
of the dual $(H^*)^{cop}$ on $A$. This action dualizes exactly to the \cite{CFM} coaction of $H$. 
Moreover \cite{MS} show that $A$ is always a Yetter-Drinfeld module for $H$; this had been proved in \cite{CFM} for $n \leq 4$. 

The question was raised as to whether a direct proof of the comodule property for $A$ via $\rho$ 
could be given, by determining precisely the identities involved (see \cite[p. 357]{MS}). In Theorem 
\ref{main} we determine exactly the identities needed, using the $q$-binomial coefficient with $q = \om.$ 
In Theorem \ref{comain} we then give a combinatorial proof of the identities. This gives an alternative to the methods of \cite{MS}. 

In Section 5 we also show directly that our algebra $A = A_n(\om)$ is in the category 
$^H_H{\mathcal Y \mathcal D}$ of Yetter-Drinfel'd modules for $H$, for any $n$, using the form of 
the comodule map $\rho$.  As a consequence $A$ is always a commutative algebra in the category $^H_H{\mathcal Y \mathcal D}$, answering another question  of \cite{CFM}.

Finally in Section 6 we discuss in more detail the dual approach of \cite{MS}. 

\newpage


\section{Preliminaries}

We let $H$ denote the Taft Hopf algebra of dimension $n^2,$ that is 
$$H=T_{n^2}(\om) = \Bk \langle x,g | x^n=0,\ g^n=1,\ xg=\om gx \rangle,$$
where $\omega$ is a fixed primitive $n^{th}$ root of 1, 
with Hopf structure given by
$$\Delta (g)=g\otimes g,\ \Delta (x)=x\otimes 1+g\otimes x$$
$$ \epsilon(g)=1,\ \epsilon (x)=0, \ S(g)=g^{-1}\  \mbox{and}\ S(x)=-g^{-1}x.$$
We also need some well-known facts about $q$-binomial coefficients \cite{K}. Recall that 
$${{b \choose k}_q} := \ds\frac{(b)!_q}{(k)!_q \ (b-k)!_q}, \text{ where } (b)!_q :=
\ds{\frac{(q-1)(q^2-1)\cdots(q^b-1)}{(q-1)^b}}.
$$

So for $k,s\in \BN,$  
$${{k+s \choose k}_q}=\frac{(1-q)\cdots(1-q^s)(1-q^{s+1})\cdots (1-q^{s+k})}{(1-q)\cdots (1-q^s)(1-q)\cdots (1-q^k)}=\frac{(1-q^{s+1})\cdots (1-q^{s+k})}{(1-q)\cdots (1-q^k)}.$$

\begin{lem}
Given $x,g\in H$ as above and $b\in \mathbb{N}$, 
$$\Delta(x^b)= \sum_{k=0}^{b} {{b \choose k}_\omega} \ g^kx^{b-k}\otimes x^k.$$
\end{lem}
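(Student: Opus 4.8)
The plan is to prove the formula $\Delta(x^b)=\sum_{k=0}^b \binom{b}{k}_\omega g^k x^{b-k}\otimes x^k$ by induction on $b$, exploiting the fact that $\Delta$ is an algebra homomorphism together with the basic commutation relation $xg=\omega gx$ in $H$. The base case $b=0$ (and $b=1$, where the formula reads $\Delta(x)=x\otimes 1+g\otimes x$) is immediate from the definition of the coproduct. For the inductive step I would write $\Delta(x^{b+1})=\Delta(x^b)\Delta(x)$ and substitute the inductive hypothesis.

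\begin{proof}[Proof sketch]
First I would expand
$$\Delta(x^{b+1})=\Delta(x^b)\Delta(x)=\left(\sum_{k=0}^b \binom{b}{k}_\omega g^k x^{b-k}\otimes x^k\right)(x\otimes 1+g\otimes x).$$
The key computational step is to multiply out the two tensor factors separately. In the first tensor slot I must commute $x$ past $g^k x^{b-k}$; the relation $xg=\omega gx$ gives $g^k x^{b-k}\cdot x = g^k x^{b-k+1}$ in the term coming from $x\otimes 1$, and $g^k x^{b-k}\cdot g=\omega^{b-k}g^{k+1}x^{b-k}$ in the term coming from $g\otimes x$, where the factor $\omega^{b-k}$ arises from moving $x^{b-k}$ past a single $g$. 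In the second slot the products are simply $x^k\cdot 1=x^k$ and $x^k\cdot x=x^{k+1}$. This produces two sums, and after reindexing the second sum ($k\mapsto k-1$) both sums contribute terms of the shape $g^k x^{b+1-k}\otimes x^k$.
\end{proof}

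The main obstacle, and the heart of the argument, is the combinatorial identity that must hold when the coefficients of $g^k x^{b+1-k}\otimes x^k$ are collected: I expect to need the $\omega$-Pascal recurrence
$$\binom{b}{k}_\omega + \omega^{b-k+1}\binom{b}{k-1}_\omega = \binom{b+1}{k}_\omega,$$
which is exactly the standard recursion for Gaussian binomial coefficients with $q=\omega$. I would either cite this as a well-known fact about $q$-binomial coefficients (it follows directly from the closed form recalled just before the lemma) or verify it from that closed form. Matching the power of $\omega$ produced by the commutation in the first tensor slot against the power demanded by this recurrence is the delicate bookkeeping step; once the exponents are seen to agree, the inductive step closes and the formula follows for all $b\in\mathbb{N}$.
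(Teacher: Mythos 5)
Your proof is correct, but it takes a different route from the paper. The paper disposes of this lemma in one line: it observes that $\Delta(x^b)=(x\otimes 1+g\otimes x)^b$, checks that the two summands $q$-commute (with $q=\omega$), and cites the $q$-binomial theorem \cite[IV.2.2]{K} with $x\mapsto g\otimes x$ and $y\mapsto x\otimes 1$. What you have written is, in effect, a self-contained proof of that $q$-binomial theorem in this special case: your induction is sound, the base cases are right, the commutation $x^{b-k}g=\omega^{b-k}gx^{b-k}$ coming from $xg=\omega gx$ is applied correctly, and the recurrence you isolate,
$$\binom{b}{k}_\omega+\omega^{b-k+1}\binom{b}{k-1}_\omega=\binom{b+1}{k}_\omega,$$
is indeed one of the two standard $q$-Pascal identities and closes the induction as you claim. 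The trade-off is the usual one: the paper's argument is shorter and makes clear that the statement is an instance of a general principle, while yours is elementary and self-contained, requiring only the closed form of $\binom{b}{k}_\omega$ recalled before the lemma (from which the Pascal recurrence is a short verification) rather than an external reference. Either is acceptable; if you keep yours, you should actually carry out the verification of the $q$-Pascal recurrence from the closed form, or cite it, since it is the only nontrivial ingredient.
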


\begin{proof} Since $\Delta(x^b) = (\Delta (x))^b = (x\otimes 1+g\otimes x)^b,$ the lemma follows from the $q$-binomial theorem \cite[IV.2.2]{K}, as follows: in \cite{K}, the theorem is stated for $(x+y)^b$, 
where $yx = q xy.$ Here we 
replace $x$ by $g \ot x$, $y$ by $x \ot 1$ and $q$ by $\omega.$
\end{proof}

\begin{cor}\label{delta} 
For any $a,b\in \mathbb{N},$ 
\begin{equation*}
\Delta (x^bg^a) = \sum_{k=0}^{b}w^{-k(b-k)}{b \choose k}_\omega x^{b-k}g^{k+a}\otimes x^kg^a.
\end{equation*}
\end{cor}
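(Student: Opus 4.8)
The plan is to exploit that $\Delta$ is an algebra homomorphism, together with the previous Lemma. Since $\Delta(g) = g \ot g$, we have $\Delta(g^a) = g^a \ot g^a$, so that
$$\Delta(x^b g^a) = \Delta(x^b)\,\Delta(g^a) = \Delta(x^b)\,(g^a \ot g^a).$$
Substituting the formula for $\Delta(x^b)$ from the Lemma and multiplying componentwise gives
$$\Delta(x^b g^a) = \sum_{k=0}^b \binom{b}{k}_\om\, g^k x^{b-k} g^a \ot x^k g^a.$$

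It then remains only to bring each summand into the stated normal form $x^{b-k} g^{k+a} \ot x^k g^a$. For the second tensor factor there is nothing to do. For the first, I would move $g^k$ to the right past $x^{b-k}$ using the defining relation $xg = \om gx$, equivalently $gx = \om^{-1} xg$. Iterating this, each of the $k$ copies of $g$ must pass each of the $b-k$ copies of $x$, producing one factor of $\om^{-1}$ per transposition; hence
$$g^k x^{b-k} = \om^{-k(b-k)} x^{b-k} g^k,$$
and therefore $g^k x^{b-k} g^a = \om^{-k(b-k)} x^{b-k} g^{k+a}$. Inserting this into the sum yields exactly the claimed identity.

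The computation has no serious obstacle; the only point requiring care is the bookkeeping of the commutation relation. In particular, I expect the main subtlety to be getting the sign of the exponent right — it should be $\om^{-k(b-k)}$, coming from $gx = \om^{-1} xg$, and not $\om^{+k(b-k)}$ — and confirming that the number of elementary transpositions needed to move $g^k$ past $x^{b-k}$ is precisely $k(b-k)$. To make this airtight I would first record the general rule $g^m x^n = \om^{-mn} x^n g^m$, verified by a routine double induction on $m$ and $n$ (or simply deduced as an immediate consequence of $xg = \om gx$), and only then specialize to $m = k$, $n = b-k$.
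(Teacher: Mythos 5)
Your proposal is correct and follows essentially the same route as the paper: apply $\Delta$ as an algebra map to $\Delta(x^b)\Delta(g^a)$, insert the Lemma's formula, and commute $g^k$ past $x^{b-k}$ via $gx=\om^{-1}xg$ to pick up the factor $\om^{-k(b-k)}$. The sign bookkeeping you flag is handled exactly as you describe, so nothing further is needed.
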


\begin{proof}
\begin{eqnarray*}
\Delta {(x^bg^a)} &=& \Delta (x^b)\Delta(g^a)
			   = \sum_{k=0}^{b} {{b \choose k}_\omega} \  g^kx^{b-k}g^a\otimes x^kg^a\\
			   &=&\sum_{k=0}^{b}w^{-k(b-k)} {{b \choose k}_\omega} \  x^{b-k}g^{k+a}\otimes x^kg^a.	
\end{eqnarray*}
\end{proof}



\section{The comodule algebra for $H$}

As noted in the introduction,  \cite{CFM} proposed that $A$ will be an $H$-comodule. 
We let $u$ denote the coset  $ z +  I$, where $I = (z^n-\om)$,  and thus 
$\{1, u, u^2, \ldots, u^{n-1} \}$ will be a basis for $A.$

For our given root of unity $\om$, we define 
\begin{equation}\num \label{a_i}
a_i:=(\omega-1)^i\omega^{\frac{i(i+1)}{2}}.
\end{equation}

The explicit  coaction $\rho :A\to A\ot H$ is now defined by 
\begin{equation}\num \label{coaction}
\rho(u)=\sum_{i=0}^{n-1}a_ix^ig^{-(i+1)}\otimes u^{i+1}.
\end{equation}

We must prove that 
\begin{equation}\num \label{comod} (id\otimes \rho)\rho=(\Delta \otimes id)\rho.  
\end{equation}

Now \cite{CFM} showed that $\rho(u)^n = \om 1$, and thus  $\rho$ is a homomorphism since 
$u^n = \om 1$. Since $\Delta$ is also a homomorphism and the powers of $u$ are a basis for $A$, 
it will suffice to check that Equation (\ref{comod}) holds when applied to the element $u$.

Evaluating Equation (\ref{comod}) on $u,$ we obtain the new equation:
\begin{equation}\num \label{identity}
\sum_{s=0}^{n-1}a_sx^sg^{-(s+1)}\otimes \rho(u)^{s+1}=\sum_{m=0}^{n-1}a_m\Delta(x^mg^{-(m+1)})\otimes u^{m+1},
\end{equation}
where by Corollary \ref{delta}, the right hand side is 
$$\sum_{m=0}^{n-1}a_m\left( \sum_{k=0}^{m}\omega^{-k(m-k)}{m \choose k}_\omega x^{m-k}g^{k-(m+1)}\otimes x^kg^{-(m+1)}\right) \otimes u^{m+1}.$$

In order to compute the left hand side of (\ref{identity}), we need to find an explicit formula for $\rho(u)^s$ for any $1\leq s\leq n.$
We start with an auxiliary lemma:
\begin{lem}\label{a_rLemma}
(i) Given $r,s \in \mathbb{N},\ a_ra_s=a_{r+s}\omega^{-rs}$ and more generally 
$$\displaystyle \Pi_{i=1}^ta_{r_i}=a_{(\sum_{i=1}^{t}r_i)}\omega^{-\sum_{j<i}r_ir_j}$$

(ii) For all $1\leq i\leq n-1$, 
$$(\sum_{j=0}^i\om^{j-i}) a_i+a_{i-1}=\om^{i+1}a_{i-1}.$$
\end{lem}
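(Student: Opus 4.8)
The plan is to prove both parts by direct computation from the defining formula $a_i = (\om-1)^i\,\om^{i(i+1)/2}$ in (\ref{a_i}), reducing each claim to elementary exponent arithmetic together with a single geometric series.

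For part (i) I would first dispatch the two-factor case and then bootstrap to the general product. Multiplying the definitions gives $a_r a_s = (\om-1)^{r+s}\om^{r(r+1)/2 + s(s+1)/2}$, whereas $a_{r+s} = (\om-1)^{r+s}\om^{(r+s)(r+s+1)/2}$; comparing the two, the powers of $(\om-1)$ agree and the exponents of $\om$ differ by $\tfrac12\big[r(r+1)+s(s+1)-(r+s)(r+s+1)\big] = -rs$, which is exactly the asserted factor $\om^{-rs}$. For the general statement I would run the identical comparison with all $t$ factors at once: the product equals $(\om-1)^{\sum r_i}\om^{\sum r_i(r_i+1)/2}$, and against $a_{(\sum r_i)}$ the surviving exponent of $\om$ is $\tfrac12\big(\sum r_i^2 - (\sum r_i)^2\big) = -\sum_{j<i} r_i r_j$, using $(\sum r_i)^2 = \sum r_i^2 + 2\sum_{j<i} r_i r_j$. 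Equivalently one may simply induct on $t$, feeding the two-factor case into the inductive step.

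For part (ii) the key simplification is the one-step ratio $a_i = (\om-1)\om^i\, a_{i-1}$, which follows directly from the definition since $\tfrac{i(i+1)}{2} - \tfrac{(i-1)i}{2} = i$ (or from (i) with $r=i-1$, $s=1$, since $a_1 = (\om-1)\om$). Substituting this into the left-hand side of the claim and dividing through by $a_{i-1}$ reduces the statement to the scalar identity $\big(\sum_{j=0}^i \om^{j-i}\big)(\om-1)\om^i + 1 = \om^{i+1}$. I would then observe $\big(\sum_{j=0}^i \om^{j-i}\big)\om^i = \sum_{j=0}^i \om^j = \tfrac{\om^{i+1}-1}{\om-1}$, so the first term collapses to $\om^{i+1}-1$ and the identity is immediate.

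There is no serious obstacle here: the computations are routine. The only points demanding a little care are the bookkeeping of the quadratic exponents in the general product of (i) — specifically recognizing the elementary symmetric sum $\sum_{j<i} r_i r_j$ through the square-of-a-sum expansion — and, in (ii), keeping the index shift $\sum_{j=0}^i \om^{j-i} \mapsto \om^{-i}\sum_{j=0}^i \om^{j}$ straight so that the geometric series telescopes correctly.
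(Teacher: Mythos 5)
Your proof is correct. The paper actually states Lemma \ref{a_rLemma} without any proof at all, evidently regarding it as a routine verification, and your direct computation from Definition (\ref{a_i}) --- the exponent bookkeeping via $(\sum r_i)^2 = \sum r_i^2 + 2\sum_{j<i}r_ir_j$ for (i), and the ratio $a_i = (\om-1)\om^i a_{i-1}$ plus the geometric series for (ii) --- is exactly the argument one would expect the authors to have in mind; all the steps check out.
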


\begin{prop} \label{power}
For any $\ds 1\leq s\leq n,$ 
$$\rho(u)^s=\sum_{k=0}^{n-1}a_k \left( \sum_{\{0 \leq i_1, \ldots, i_s\leq k \ | \ \sum_{j=1}^si_j=k \} }\omega^{\sum_{j=2}^si_j(j-1)} \right) x^k g^{-(k+s)}\otimes u^{k+s}.$$ 
\end{prop}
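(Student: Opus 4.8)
The plan is to argue by induction on $s$, computing $\rho(u)^{s+1}=\rho(u)^s\,\rho(u)$ by multiplying out in $A\ot H$. For brevity write
$$c_s(k):=\sum_{\{0\leq i_1,\ldots,i_s\leq k\ |\ \sum_{j=1}^s i_j=k\}}\om^{\sum_{j=2}^s i_j(j-1)},$$
so that the claimed formula reads $\rho(u)^s=\sum_{k=0}^{n-1}a_k\,c_s(k)\,x^kg^{-(k+s)}\ot u^{k+s}$. The base case $s=1$ is immediate: the constraint $i_1=k$ leaves a single term whose exponent is the empty sum, so $c_1(k)=1$ and the formula collapses to the defining coaction (\ref{coaction}).

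For the inductive step I assume the formula for $s$ and multiply on the right by $\rho(u)=\sum_i a_i x^ig^{-(i+1)}\ot u^{i+1}$. The essential computation is the product of a typical pair of tensor factors. In the $H$-component I would push $g^{-(k+s)}$ past $x^i$ using $g^ax^b=\om^{-ab}x^bg^a$, obtaining
$$\bigl(x^kg^{-(k+s)}\bigr)\bigl(x^ig^{-(i+1)}\bigr)=\om^{(k+s)i}\,x^{k+i}g^{-(k+i+s+1)},$$
while in the $A$-component $u^{k+s}u^{i+1}=u^{k+i+s+1}$. Crucially, since $x^n=0$ in $H$, every product with $k+i\geq n$ vanishes; hence, collecting the surviving terms according to $K:=k+i$ keeps $K$ in the range $0\leq K\leq n-1$, exactly matching the index set of the target expression.

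It then remains to reconcile the scalar coefficients. Applying Lemma \ref{a_rLemma}(i) in the form $a_ka_i=a_{k+i}\om^{-ki}=a_K\om^{-ki}$, the coefficient of $x^Kg^{-(K+s+1)}\ot u^{K+s+1}$ becomes $a_K\sum_{k+i=K}\om^{-ki}\om^{(k+s)i}c_s(k)=a_K\sum_{i=0}^{K}\om^{si}\,c_s(K-i)$, the powers $\om^{\pm ki}$ cancelling. The proof is thereby reduced to the purely combinatorial recursion $c_{s+1}(K)=\sum_{i=0}^K\om^{si}c_s(K-i)$, which I expect to be the main step. I would establish it by reading the outer summation index $i$ as the value $i_{s+1}$ of a new, final part: each composition $(i_1,\ldots,i_s)$ of $K-i$ contributing $\om^{\sum_{j=2}^s i_j(j-1)}$ to $c_s(K-i)$ extends to a composition $(i_1,\ldots,i_s,i_{s+1})$ of $K$, and the extra factor $\om^{si}=\om^{\,i_{s+1}((s+1)-1)}$ supplies precisely the missing $j=s+1$ term of the exponent $\sum_{j=2}^{s+1}i_j(j-1)$. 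Summing over all $i$ therefore rebuilds $c_{s+1}(K)$ term by term, which completes the induction.
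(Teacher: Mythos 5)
Your argument is correct, and it takes a genuinely different route from the paper's. The paper proves the proposition in one pass: it selects one term from each of the $s$ factors of $\rho(u)^s$, combines all $s$ scalars simultaneously via the general form of Lemma \ref{a_rLemma}(i), namely $\prod_{i=1}^t a_{r_i}=a_{(\sum r_i)}\om^{-\sum_{j<i}r_ir_j}$, moves every $g$-power past every $x$-power, and then checks that the two quadratic double sums appearing in the exponent of $\om$ cancel, leaving exactly $\om^{\sum_{j=2}^s i_j(j-1)}$. You instead induct on $s$: this needs only the two-factor identity $a_ka_i=a_{k+i}\om^{-ki}$ and the single commutation $g^{-(k+s)}x^i=\om^{(k+s)i}x^ig^{-(k+s)}$, after which everything reduces to the convolution $c_{s+1}(K)=\sum_{i=0}^K\om^{si}c_s(K-i)$ for your abbreviation $c_s(k)$ of the inner sum, which you verify by splitting off the last part $i_{s+1}=i$ of a composition of $K$. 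Your version trades the paper's global cancellation of exponents for a cleaner local computation, and it makes visible that the coefficients $c_s(k)$ obey the convolution recurrence of a product of geometric series --- which is precisely the generating-function structure exploited in Section 4. The paper's expansion, by contrast, yields the closed form of the coefficient directly without having to recognize any recursion. Two details you use correctly and could make fully explicit: the constraints $i_j\le k$ in the index set are automatic for nonnegative parts summing to $k$, so the convolution over $0\le i\le K$ really captures every contributing pair once $K\le n-1$; and the terms with $k+i\ge n$ are killed by $x^n=0$, which is what justifies truncating the outer sum at $n-1$.
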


\begin{proof}
Let $\ds \rho(u)_j=\sum_{i_j=0}^{n-1}a_{i_j}x^{i_j}g^{-(i_j+1)}\otimes u^{i_j+1}$ denote the $j$-th copy of $\rho(u)$ in $\rho(u)^s.$  As we multiply one term from each of the $s$ factors $\rho(u)_j$ in $\rho(u)^s,$ we obtain a sum of terms of the form 
\begin{equation}\num \label{term}
a_{i_1}\cdots a_{i_j}\cdots a_{i_s}x^{i_1}g^{-(i_1+1)}\cdots x^{i_j}g^{-(i_j+1)}\cdots x^{i_s}g^{-(i_s+1)}\otimes u^{i_1+1}\cdots u^{i_j+1}\cdots u^{i_s+1}.
\end{equation}

Let $k=\sum_{j=1}^si_j.$ Using Lemma \ref{a_rLemma} (i) and the fact that  $ g^r x^s =  
\om^{-rs} x^s g^r $, (\ref{term}) becomes
$$\displaystyle a_{k}\omega^{-(\sum_{t< r}i_ri_t)}\Pi_{j=2}^s \om^{\sum_{l=1}^{j-1}i_j(i_l+1)}x^kg^{-(k+s)}\otimes u^{k+s}.$$

Simplifying, we have 
\begin{align}
\num
\omega^{-(\sum_{t<r}i_ri_t)}\Pi_{j=2}^s\omega^{\sum_{l=1}^{j-1}i_j(i_l +1)}&=&\omega^{-(\sum_{t<r}i_ri_t)}\omega^{\sum_{j=2}^s\sum_{l=1}^{j-1}(i_ji_l+i_j)} \ \ \ \ \ \ \ \ \ \ \ \ \ \ \\
														&=&\omega^{-(\sum_{t<r}i_ri_t)}\omega^{\sum_{j=2}^s(\sum_{l=1}^{j-1}i_ji_l)}\omega^{\sum_{j=2}^si_j(j-1)} \notag \\
														&=&\omega^{\sum_{j=2}^si_j(j-1)}, \ \ \ \ \ \ \ \ \ \ \ \ \ \ \ \ \ \ \ \ \ \ \ \ \ \ \ \ \ \ \ \ \  \notag											
\end{align}
since the first two powers of $\omega$ which appear have opposite exponents. 

Finally, since such a term arises whenever $i_1+\cdots +i_s=k,$ by ordering the terms according to powers of $x$ we have that

$$\rho(u)^s=\sum_{k=0}^{n-1} a_k \left(\sum_{\{0 \leq i_1, \ldots, i_s\leq k \ | \ \sum_{j=1}^si_j=k \}}\omega^{\sum_{j=2}^si_j(j-1)} \right) x^k g^{-(k+s)}\otimes u^{k+s}.$$
\end{proof}

Using Proposition \ref{power} with $s+1$ instead of $s$, we have all the components of our desired equation (\ref{identity}). Substituting them in (\ref{identity}), we may compare the coefficients on both sides: 
\begin{eqnarray*}
\sum_{s=0}^{n-1}\sum_{k=0}^{n-1}a_sa_k \left(\sum_{\{0 \leq i_1, \ldots, i_{s+1}\leq k \ | \ \sum_{j=1}^{s+1}i_j=k\} }\omega^{\sum_{j=2}^{s+1}i_j(j-1)} \right) x^sg^{-(s+1)}\otimes x^kg^{-(k+s+1)}\otimes u^{k+s+1}\\
= \sum_{m=0}^{n-1}\sum_{l=0}^ma_m\omega^{l(m-l)}{m \choose l}_\omega x^{m-l}g^{l-(m+1)}\otimes x^lg^{-(m+1)}\otimes u^{m+1} 
\end{eqnarray*}

By linear independence, the coefficients of each term on both sides should agree. Thus we have:

\begin{thm}\label{main}
Fix a primitive $n$th root of unity $\omega$ in $\Bk$, and let $A = A_n(\om)$ and $H = T_{n^2} (\om)$ be as above. 
Then $A$ is a right $H$-comodule algebra via the coaction $\rho$ in Equation (\ref{coaction}) $\iff$ for all pairs of natural numbers $0\leq k,s\leq n-1$, 
\begin{equation*}\sum_{\{0 \leq i_1, \ldots, i_{s+1}\leq k \ | \ \sum_{j=1}^{s+1}i_j=k \} }\ds{\omega^{\sum_{j=2}^{s+1}i_j(j-1)}}=
\left\{ 
\begin{array}{cl}
\ds{{k+s \choose k}_\omega} & \text{if $k+s<n$}\\
\ds{0} & \text{if $k+s\geq n$}.
\end{array}  \right. 
\end{equation*}
\end{thm}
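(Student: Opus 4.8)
The entire theorem is an equivalence, and the text preceding it has already assembled the two sides of Equation (\ref{identity}) as explicit elements of $H\ot H\ot A$. My plan is therefore to finish by a careful comparison of coefficients in a fixed basis, treating the two implications simultaneously, rather than proving the combinatorial identity itself (that is deferred to the later combinatorial theorem).

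First I would record the reduction that makes the problem basis-wise checkable. Since $u^n=\om 1$ and, by \cite{CFM}, $\rho(u)^n=\om 1$, the map $\rho$ is an algebra homomorphism; both $(\mathrm{id}\ot\rho)\rho$ and $(\Delta\ot\mathrm{id})\rho$ are then algebra homomorphisms $A\to H\ot H\ot A$, so Equation (\ref{comod}) holds if and only if it holds on the generator $u$, i.e.\ if and only if Equation (\ref{identity}) holds. This already fixes the logical structure of the stated ``$\iff$'': it remains to decide exactly when (\ref{identity}) is valid.

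Next I would expand both sides in the basis $\{x^ag^b\ot x^cg^d\ot u^e\}$ of $H\ot H\ot A$, using Proposition \ref{power} (with $s+1$ in place of $s$) on the left and Corollary \ref{delta} on the right. The key organizing observation is that the two $x$-exponents are unwrapped: on the left a summand carries $x^s$ in the first slot and $x^k$ in the second with $0\le s,k\le n-1$, while on the right it carries $x^{m-l}$ and $x^l$ with $0\le l\le m\le n-1$. Hence the pair of $x$-exponents determines the summand uniquely on each side, and matching them forces $l=k$ and $m=k+s$. I would then verify that under this identification the $g$-exponents and the $u$-exponent agree modulo the relations $g^n=1$, $u^n=\om 1$, including the wrap-around case $k+s=n-1$, where $u^{k+s+1}=u^n=\om 1$ contributes a factor $\om$ on both sides that cancels. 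Comparing the surviving scalars and using $a_sa_k=a_{k+s}\om^{-ks}$ from Lemma \ref{a_rLemma}(i) together with $a_m=a_{k+s}$, the powers of the $a_i$'s and the stray powers of $\om$ cancel, leaving exactly the exponential sum equal to ${k+s\choose k}_\om$ whenever $k+s<n$.

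The case $k+s\ge n$ is where the dichotomy in the statement originates, and it is the step to watch. Here the crucial point is that on the right-hand side the two $x$-exponents always sum to $m\le n-1$, whereas on the left they sum to $s+k\ge n$. Consequently the left-hand basis vector $x^sg^{-(s+1)}\ot x^kg^{-(k+s+1)}\ot u^{k+s+1}$ simply does not occur on the right, however the $u$- and $g$-powers reduce modulo $n$; and since distinct left-hand summands carry distinct $x$-exponent pairs, there is no internal cancellation among them either. Thus the coefficient $a_sa_k\,(\textstyle\sum\om^{\sum_{j=2}^{s+1}i_j(j-1)})$ must vanish, and as $a_sa_k\ne 0$ (because $\om\ne 1$) the exponential sum is forced to be $0$. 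Assembling the two cases over all $0\le k,s\le n-1$ yields precisely the stated system of identities, equivalent to (\ref{identity}) and hence to the comodule property.
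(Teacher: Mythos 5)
Your proposal is correct and follows essentially the same route as the paper: reduce coassociativity to the generator $u$ via the homomorphism property, expand both sides with Proposition \ref{power} and Corollary \ref{delta}, and compare coefficients in the basis $x^ag^b\ot x^cg^d\ot u^e$. You are in fact more explicit than the paper (which dispatches the comparison with a single appeal to linear independence) in justifying the $k+s\geq n$ case --- observing that the $x$-exponents on the right always sum to at most $n-1$, that distinct left-hand summands cannot cancel, and that $a_sa_k\neq 0$ --- which is a welcome amplification rather than a deviation.
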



\section{A proof of the identities}

In this section we give a direct combinatorial proof of the identities in Theorem \ref{main}. 
We thank Jason Fulman for pointing it out to us. 

We consider the expansion of $\ds \frac{1}{(1-z)(1-z\om)\cdots (1-z\om^s)}$ as a formal power series 
in the ring $\Bk[[z]]$. Write 
$$\ds \frac{1}{(1-z)(1-z\om)\cdots (1-z\om^s)}=\sum_{k\geq0}\beta_kz^k.$$ 

\begin{lem}
For each $k\geq 0,$ 
$$\beta_k=\sum_{\{0 \leq i_1, \ldots, i_{s+1}\leq k \ | \ \sum_{j=1}^{s+1}i_j=k \} }\ds{\omega^{\sum_{j=2}^{s+1}i_j(j-1)}}.$$ 
\end{lem}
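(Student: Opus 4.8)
The plan is to expand each factor on the left as a geometric series and read off the coefficient of $z^k$ directly. First I would rewrite the denominator as a product indexed so that the $j$-th factor carries the exponent $(j-1)$ of $\omega$:
$$\frac{1}{(1-z)(1-z\omega)\cdots(1-z\omega^s)} = \prod_{j=1}^{s+1}\frac{1}{1-z\omega^{j-1}}.$$
Each factor lies in $\Bk[[z]]$ and expands as $\ds\frac{1}{1-z\omega^{j-1}} = \sum_{i_j\geq 0}\omega^{(j-1)i_j}z^{i_j}$.

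Next I would multiply these $s+1$ series together. Distributing the product, a generic term arises from a choice of exponent $i_j$ in the $j$-th factor, and contributes
$$\omega^{\sum_{j=1}^{s+1}(j-1)i_j}\, z^{\sum_{j=1}^{s+1}i_j}.$$
Collecting the coefficient of $z^k$ then amounts to summing over all tuples $(i_1,\ldots,i_{s+1})$ of nonnegative integers with $\sum_{j=1}^{s+1}i_j = k$.

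Finally, two cosmetic simplifications complete the identification with the stated sum. The $j=1$ factor contributes $(j-1)i_1 = 0$ to the exponent, so $\sum_{j=1}^{s+1}(j-1)i_j = \sum_{j=2}^{s+1}(j-1)i_j$, matching the exponent in the lemma. Moreover, the constraint $\sum_j i_j = k$ with all $i_j\geq 0$ automatically forces $0\leq i_j\leq k$, so the summation range coincides exactly with $\{0\leq i_1,\ldots,i_{s+1}\leq k \mid \sum_{j=1}^{s+1}i_j=k\}$.

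There is essentially no obstacle here: the argument is just the expansion of a product of geometric series followed by comparison of coefficients of $z^k$. The only points requiring care are the index shift $t\mapsto j-1$ that aligns the factor $\frac{1}{1-z\omega^{t}}$ with the variable $i_{j}$, and the observation that the $j=1$ term drops out of the $\omega$-exponent; once these are in place, the claim follows immediately.
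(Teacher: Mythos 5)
Your proposal is correct and follows exactly the same route as the paper's proof: expand each factor $\frac{1}{1-z\omega^{j-1}}$ as a geometric series, multiply the $s+1$ series, and read off the coefficient of $z^k$, noting that the $j=1$ factor contributes nothing to the $\omega$-exponent. No gaps; the argument matches the paper's in every essential detail.
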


\begin{proof}
We know that $$\sum_{k\geq 0}\beta_kz^k=\Pi_{l=1}^{s+1}(\frac{1}{1-z\om^{l-1}})=\Pi_{l=1}^{s+1}\left(\sum_{i_l\geq 0}(z\om^{l-1})^{i_l}\right).$$ 

Whenever $\ds \sum_{l=1}^{s+1}i_l=k,$ the last product gives a term $\ds z^k\om^{\sum_{l=2}^{s+1}i_l(l-1)},$ where the sum in the exponent starts at $l=2$ because $l-1=0$ for $l=1.$  Thus
$$\beta_k=\sum_{\{0 \leq i_1, \ldots, i_{s+1}\leq k \ | \ \sum_{l=1}^{s+1}i_l=k \} }\ds{\omega^{\sum_{l=2}^{s+1}i_l(l-1)}}$$
and thus the left hand side of the identity in Theorem  \ref{main} is the coefficient of $z^k$ in the power series. 
\end{proof}

\begin{thm} \label{comain} The identities in Theorem \ref{main} hold, for all $n > 1$, any given primitive $n^{th}$ root of unity $\omega$ in $\Bk$, and all pairs of natural numbers $0\leq k,s\leq n-1$.
\end{thm}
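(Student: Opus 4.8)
The plan is to pin down the coefficient $\beta_k$ explicitly. The preceding Lemma already identifies $\beta_k$ with the left-hand side of the identity in Theorem \ref{main}, so it remains only to show that $\beta_k = {k+s \choose k}_\omega$ when $k+s<n$ and $\beta_k = 0$ when $k+s\geq n$. The route I favor avoids all specialization subtleties by working directly at $q=\omega$ through a first-order recurrence. Write $F_s(z)=\prod_{l=0}^{s}(1-z\omega^l)^{-1}=\sum_{k\geq 0}\beta_k z^k$ in $\Bk[[z]]$.

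First I would establish the functional equation
$$(1-z\omega^{s+1})\,F_s(\omega z)=(1-z)\,F_s(z).$$
This follows immediately by reindexing: replacing $z$ by $\omega z$ shifts the exponents of $\omega$ appearing in the product from $\{0,\dots,s\}$ to $\{1,\dots,s+1\}$, so the ratio $F_s(\omega z)/F_s(z)$ telescopes to $(1-z)/(1-z\omega^{s+1})$. Next I would compare coefficients of $z^k$ on the two sides, using $F_s(\omega z)=\sum_m \beta_m\omega^m z^m$. This gives
$$\beta_k(\omega^k-1)=\beta_{k-1}(\omega^{s+k}-1),$$
a clean one-step recurrence with $\beta_0=1$.

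Now comes the key point that makes the root-of-unity case painless: since $1\leq k\leq n-1$, we have $\omega^k\neq 1$, so the factor $\omega^k-1$ is never zero and the recurrence can be solved by telescoping,
$$\beta_k=\prod_{j=1}^{k}\frac{1-\omega^{s+j}}{1-\omega^{j}}={k+s \choose k}_\omega.$$
Finally I would read off the two cases from this product. When $k+s<n$, each exponent $s+j$ with $1\leq j\leq k$ lies strictly between $0$ and $n$, so no numerator factor vanishes and $\beta_k$ is the genuine nonzero $q$-binomial coefficient. When $k+s\geq n$, the index $j_0=n-s$ satisfies $1\leq j_0\leq k$ (here $j_0\geq 1$ because $s\leq n-1$, and $j_0\leq k$ because $k+s\geq n$), so the product contains the factor $1-\omega^{s+j_0}=1-\omega^{n}=0$, forcing $\beta_k=0$. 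Combined with the Lemma, this settles the identity.

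The main obstacle is precisely the bookkeeping at the root of unity, and the recurrence is engineered to handle it. One must verify that the denominators $1-\omega^{j}$ remain nonzero throughout (guaranteed by $j\leq k\leq n-1$) so that no degeneracy ever enters the denominator, while the numerator range $\{s+1,\dots,s+k\}$, having length $k\leq n-1<n$, contains \emph{at most one} multiple of $n$, and contains exactly one (namely $n$ itself) precisely when $k+s\geq n$. An alternative would be to quote the standard $q$-series identity $\prod_{l=0}^{s}(1-zq^l)^{-1}=\sum_{k}{k+s \choose k}_q z^k$ over $\Bk(q)$, note both sides are polynomials in $q$, and then set $q=\omega$; but that approach forces one to distinguish the polynomial value of the Gaussian binomial from its formal quotient in the regime $k+s\geq n$, a nuisance that the recurrence sidesteps entirely.
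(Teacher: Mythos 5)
Your proposal is correct, and it follows the same overall strategy as the paper: both arguments identify the left-hand side of the identity with the coefficient $\beta_k$ of $z^k$ in $\prod_{l=0}^{s}(1-z\om^l)^{-1}$ (that is the preceding Lemma) and then compute $\beta_k$ a second way to obtain $\prod_{j=1}^{k}\frac{1-\om^{s+j}}{1-\om^{j}}$, reading off the vanishing for $k+s\geq n$ from the factor $1-\om^{n}$ in the numerator. The difference is in how that second computation is carried out. The paper simply quotes Theorem 349 of Hardy--Wright for the expansion of $\prod(1-z\om^l)^{-1}$ and substitutes; you instead derive the product formula from scratch via the $q$-difference equation $(1-z\om^{s+1})F_s(\om z)=(1-z)F_s(z)$ and the resulting first-order recurrence $\beta_k(\om^k-1)=\beta_{k-1}(\om^{s+k}-1)$. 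Your route buys self-containment and, more importantly, it makes the specialization at a root of unity airtight: the recurrence is an identity in $\Bk$ with $\om$ already fixed, the only divisions are by $1-\om^{j}$ with $1\leq j\leq k\leq n-1$ (nonzero by primitivity), and the case split falls out of counting multiples of $n$ in $\{s+1,\dots,s+k\}$ exactly as you say. The paper's citation of Hardy--Wright glosses over whether that classical identity is being used formally or analytically at $\om$ a root of unity; your argument removes any such worry at the cost of a short extra computation.
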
 

\begin{proof}
We evaluate the coefficient $\beta_k$ in a different way, using Theorem 349 in \cite{HW} which states that given $\om\in \Bk,$ 
$$\ds \frac{1}{(1-z\om)(1-z\om^2)\cdots (1-z\om^j)}=1+z\om\frac{1-\om^j}{1-\om}+z^2\om^2\frac{(1-\om^j)(1-\om^{j+1})}{(1-\om)(1-\om^2)}+\cdots.$$
Replacing $z\om$ by $z$ we get
$$\ds \frac{1}{(1-z)(1-z\om)\cdots (1-z\om^{j-1})}=1+z\frac{1-\om^j}{1-\om}+z^2\frac{(1-\om^j)(1-\om^{j+1})}{(1-\om)(1-\om^2)}+\cdots$$
and if we choose $j=s+1$ then
$$\ds \frac{1}{(1-z)(1-z\om)\cdots (1-z\om^s)}=1+z\frac{1-\om^{s+1}}{1-\om}+z^2\frac{(1-\om^{s+1})(1-\om^{s+2})}{(1-\om)(1-\om^2)}+\cdots.$$
In particular, the coefficient $\beta_k$ of $z^k$ turns out to be
$$\frac{(1-\om^{s+1})\cdots (1-\om^{s+k})}{(1-\om)\cdots (1-\om^k)}={{k+s \choose k}_\om}.$$

Since $\beta_k$ is unique, both forms must agree and  
$$\sum_{\{0 \leq i_1, \ldots, i_{s+1}\leq k \ | \ \sum_{j=1}^{s+1}i_j=k \} }\ds{\omega^{\sum_{j=2}^{s+1}i_j(j-1)}}={{k+s \choose k}_\om}.$$

When $k+s\geq n$ with $0\leq k\leq n-1,$ one of the factors in the numerator \\
$(1-\om^{s+1})\cdots (1-\om^{s+k})$ is $1-\om^n=0$ while the denominator $(1-\om)\cdots (1-\om^k)\neq 0,$
making ${{k+s \choose k}_\om}=0$ as required.
\end{proof}

\begin{cor} \label{comod2} The algebra $A$ is an $H$-comodule algebra, via the coaction in Equation 
(\ref{coaction}).
\end{cor}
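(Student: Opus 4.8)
The plan is to derive the corollary by simply chaining the two main results that have just been established, since between them they supply both the equivalence and the verification of its hypothesis. Theorem \ref{main} says that $A$ is a right $H$-comodule algebra via $\rho$ \emph{if and only if} the family of $q$-identities holds for every pair $0 \le k,s \le n-1$, while Theorem \ref{comain} says that precisely these identities do hold for all $n > 1$ and all such $k,s$. The argument is therefore to feed the conclusion of Theorem \ref{comain} into the ``$\Leftarrow$'' direction of Theorem \ref{main}.

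Before doing so I would recall the reductions set up in Section 3 just prior to Theorem \ref{main}, so as to confirm that nothing beyond coassociativity is at issue. The map $\rho$ extends to a well-defined unital algebra homomorphism because $\rho(u)^n = \om 1$ (from \cite{CFM}) matches $\rho(u^n) = \rho(\om 1)$, using $u^n = \om 1$; and the counit axiom $(\eps_H \ot \mathrm{id})\rho = \mathrm{id}$ reduces, since both sides are algebra maps and the powers of $u$ form a basis, to a one-line check on the generator $u$, where $\eps(x) = 0$ kills all but the $i=0$ term and $a_0 = 1$. This isolates the coassociativity condition (\ref{comod}) as the only substantive requirement, and that condition was shown—after evaluation on $u$ and comparison of coefficients in the linearly independent basis—to be equivalent to exactly the identities displayed in Theorem \ref{main}. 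Invoking Theorem \ref{comain} then supplies those identities and closes the argument.

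I expect essentially no obstacle to remain at this stage, since all of the genuine computational and combinatorial effort lives in Theorems \ref{main} and \ref{comain}. The single point deserving care is bookkeeping: one should confirm that ``the identities in Theorem \ref{main}'' whose truth Theorem \ref{comain} certifies are literally the same identities, with the same index ranges $0 \le k,s \le n-1$ and the same distinction between the cases $k+s < n$ and $k+s \ge n$, that the equivalence in Theorem \ref{main} demands—so that the two theorems compose cleanly and the coaction conventions ($A \ot H$ throughout, with the coefficient comparison carried out consistently on both sides of (\ref{identity})) line up without any hidden mismatch.
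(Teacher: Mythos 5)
Your proposal is correct and matches the paper's (implicit) argument exactly: the corollary follows by combining the equivalence in Theorem \ref{main} with the verification of the identities in Theorem \ref{comain}. The additional remarks on the algebra-map and counit reductions are consistent with the setup already carried out in Section 3 before Theorem \ref{main}.
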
 

\section{YD-module algebras and $H$-commutativity}

In this section we consider the (left, left) Yetter-Drinfel'd category $^H_H{\CY \CD}$. 
Recall that a module $M$ is in $^H_H{\CY \CD}$ if it is both a 
left $H$-module, a left $H$-comodule (via $\rho$), and 
\begin{equation} \label{YD} \num
h\cdot \rho(m) = \sum \rho(h_1 \cdot m) (h_2 \ot 1).
\end{equation}

 \cite[Prop 2.2(e)]{CFM} prove that our algebra $A = A_n$ is in ${^H_H{\CY \CD}}$ for 
 $H = T_{n^2} (\om)$, for all $n \leq 4$.  Here we show this for all $n$. We use a result from 
 \cite{CFM} which holds for any $H$ and any $A$:

\begin{lem} \cite[Lemma 2.10]{CFM} \label{genAH} Let $A$ be a left $H$-module and a left  
$H$-comodule.

(a) Let $M$ be an $H$-submodule of $A$. If the Yetter-Drinfel'd condition is satisfied for all 
$m \in M$ and all algebra generators of $H$ (from some chosen generating set), then it is 
satisfied for all $m \in M$ and all $H \in H$. 

(b) If $A$ is also an $H$-module algebra and an $H$-module coalgebra, and if the Yetter-Drinfeld 
condition holds for all $h \in H$ and all algebra generators of $A$  (from some generating set), 
then $A \in {^H_H{\CY \CD}}.$

\end{lem}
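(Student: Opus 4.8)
The plan is to prove both parts by the same device: in each case the elements for which the Yetter--Drinfel'd condition (\ref{YD}) holds form a \emph{subalgebra}, so that verifying the condition on a generating set forces it everywhere. Writing $\rho(m) = \sum m_{-1}\ot m_0 \in H \ot A$, I would first introduce the two $\Bk$-bilinear maps
$$L(h,m) = h\cdot\rho(m) = \sum h_1 m_{-1}\ot h_2\cdot m_0, \qquad R(h,m) = \sum \rho(h_1\cdot m)(h_2\ot 1),$$
where $H\ot A$ carries the left action $h\cdot(a\ot b) = \sum h_1 a\ot h_2\cdot b$ and the right multiplication $(a\ot b)(c\ot 1) = ac\ot b$. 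Condition (\ref{YD}) is precisely $L(h,m) = R(h,m)$, and since both $L$ and $R$ are linear in each variable, the subalgebra/submodule bookkeeping goes through cleanly.

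For part (a), I would fix the $H$-submodule $M$ and set $Y = \{h\in H : L(h,m)=R(h,m)\ \text{for all } m\in M\}$. This is a subspace containing $1$ (both sides reduce to $\rho(m)$ there), and by hypothesis it contains the chosen algebra generators, so it suffices to show $Y$ is closed under products. First I would record the elementary compatibility that the left action and right multiplication by $H\ot 1$ commute, giving $h\cdot\big(\rho(m')(c\ot 1)\big) = \big(h\cdot\rho(m')\big)(c\ot 1)$. Then, for $h,h'\in Y$, I would compute $L(hh',m) = h\cdot\big(h'\cdot\rho(m)\big)$, insert the condition for $h'$, push $h\cdot$ past the right factor using that compatibility, and apply the condition for $h$ \emph{to the element} $h'_1\cdot m$. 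This last move is the one point that genuinely uses the submodule hypothesis: $h'_1\cdot m$ again lies in $M$, so the $h$-condition is available for it. Collecting terms and using that $\Delta$ is an algebra map, $\Delta(hh') = \sum h_1 h'_1\ot h_2 h'_2$, rewrites the result as $R(hh',m)$. Hence $Y$ is a subalgebra containing the generators, so $Y=H$.

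Part (b) is the mirror-image argument in the other variable. I would set $Z = \{a\in A : L(h,a)=R(h,a)\ \text{for all } h\in H\}$ (by part (a) one may even restrict to generators of $H$). Again $Z$ is a subspace, it contains $1$ (using $h\cdot 1 = \eps(h)1$ and $\rho(1)=1\ot 1$), and it contains the algebra generators of $A$ by hypothesis, so the task reduces to closure under products. For $a,b\in Z$ I would expand both $L(h,ab)$ and $R(h,ab)$ using the module-algebra identity $h\cdot(ab) = \sum(h_1\cdot a)(h_2\cdot b)$ together with the multiplicativity of the coaction supplied by the structural hypotheses on $A$, and then feed in the conditions for $a$ and for $b$ separately; the two sides match after reindexing. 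This shows $Z$ is a subalgebra containing a generating set, so $Z=A$ and $A\in{^H_H{\CY \CD}}$. I expect the real obstacle in both parts to be purely the Sweedler-index bookkeeping in the product computations — in particular getting the counit and coassociativity reindexing right so that the two sides visibly coincide — rather than any conceptual difficulty; the one conceptual point worth isolating is the use of the submodule hypothesis in (a), since without it the $h$-condition could not legitimately be applied to $h'_1\cdot m$.
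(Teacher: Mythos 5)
Your proposal is correct, but there is nothing in the paper to compare it against: the paper states this lemma as a citation of \cite[Lemma 2.10]{CFM} and gives no proof of its own, so your argument should be measured against the standard one, which it reproduces accurately. Both halves are the usual ``the set of elements satisfying the Yetter--Drinfel'd condition is a subalgebra'' device, and the key points are exactly where you place them: in (a), that $(hh')\cdot\rho(m)=h\cdot(h'\cdot\rho(m))$, that the diagonal left action on $H\ot A$ commutes with right multiplication by $H\ot 1$, and that the submodule hypothesis is what licenses applying the $h$-condition to $h'_1\cdot m$; in (b), the multiplicativity of $\rho$ and the module-algebra identity. Two small remarks. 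First, the hypothesis in (b) as printed, ``$H$-module coalgebra,'' should be read as ``$H$-comodule algebra'' (i.e.\ $\rho(ab)=\rho(a)\rho(b)$ in $H\ot A$); that is the property your argument actually uses, and without it the computation cannot start. Second, in (b) the two substitutions are not quite parallel: expanding $R(h,ab)=\sum\rho(h_1\cdot a)\,\rho(h_2\cdot b)(h_3\ot 1)$, one must first apply the condition for $b$ to the inner factor $\rho(h_2\cdot b)(h_3\ot 1)$, reindex by coassociativity, and only then apply the condition for $a$ to $\sum(h_1\cdot a)_{-1}h_2\ot(h_1\cdot a)_0$; the order matters because the leftover Sweedler leg sits between the two coactions in $H\ot A$. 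This is precisely the bookkeeping you anticipated, so the proof is sound as outlined.
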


\begin{prop} \label{AYD} The algebra $A= A_n(\om)$ is in  ${^H_H{\CY \CD}}$ for the 
Taft algebra $H = T_{n^2}(\om)$, for all $n$.
\end{prop}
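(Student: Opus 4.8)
The plan is to apply Lemma~\ref{genAH}(b), which reduces verifying the Yetter-Drinfel'd condition for all of $H$ and all of $A$ to checking it only for the algebra generators of $H$ and the algebra generators of $A$. Since $H = T_{n^2}(\om)$ is generated as an algebra by $g$ and $x$, and $A = A_n(\om)$ is generated by the single element $u$, the infinitely many conditions collapse to a small finite verification. Before invoking the lemma I must confirm its hypotheses: that $A$ is a left $H$-module, a left $H$-comodule, an $H$-module algebra, and an $H$-module coalgebra. The comodule structure is exactly the coaction $\rho$ whose validity is now guaranteed by Corollary~\ref{comod2} (reading $\rho$ as a left coaction), and the module structure is the one coming from the $D(H)$-action discussed in the introduction; I would record explicitly the action of $g$ and $x$ on the basis $\{1, u, \ldots, u^{n-1}\}$.

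First I would write down the left $H$-action on $A$ concretely, giving $g \cdot u^i$ and $x \cdot u^i$ in terms of the $a_i$ and powers of $\om$, and verify the module-algebra and module-coalgebra axioms (these are compatibility conditions that should follow from the definition of the action, essentially because $g$ acts as an algebra automorphism and $x$ acts as a skew-derivation). Then the core computation is to check Equation~(\ref{YD}), namely $h \cdot \rho(u) = \sum \rho(h_1 \cdot u)(h_2 \ot 1)$, for the generator $u$ of $A$ and for $h = g$ and $h = x$ separately. For $h = g$, since $g$ is grouplike, $\Delta(g) = g \ot g$, so the right-hand side becomes $\rho(g \cdot u)(g \ot 1)$, and both sides should match after using that $g$ acts diagonally on the basis. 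For $h = x$, using $\Delta(x) = x \ot 1 + g \ot x$, the right-hand side expands to $\rho(x \cdot u)(x \ot 1) + \rho(g \cdot u)(gx \ot 1)$, and I would compare this term-by-term against $x \cdot \rho(u)$, where the left action of $x$ on the first tensor factor of $\rho(u) = \sum_i a_i x^i g^{-(i+1)} \ot u^{i+1}$ is computed using the relation $xg = \om gx$ in $H$.

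The main obstacle I expect is the $h = x$ case: it requires carefully reconciling three different bookkeeping devices at once --- the skew-derivation action of $x$ on powers of $u$, the multiplication $x^i g^{-(i+1)} \mapsto x \cdot (x^i g^{-(i+1)})$ inside $H$ with its $\om$-commutation factors, and the recursive structure of the coefficients $a_i$. The identity from Lemma~\ref{a_rLemma}(ii), namely $(\sum_{j=0}^i \om^{j-i}) a_i + a_{i-1} = \om^{i+1} a_{i-1}$, is almost certainly the algebraic lever that makes the two sides coincide, since it relates $a_i$ to $a_{i-1}$ with exactly the kind of geometric-sum factor that a skew-derivation produces; so I would aim to massage the $h=x$ identity into a form where this relation can be applied coefficient-by-coefficient. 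Once both generator cases check out, Lemma~\ref{genAH}(b) delivers the conclusion that $A \in {^H_H{\CY \CD}}$ for all $n$, completing the proof.
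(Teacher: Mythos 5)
Your proposal follows essentially the same route as the paper: reduce via Lemma~\ref{genAH} to checking the Yetter--Drinfel'd condition~(\ref{YD}) for $u$ against the generators $g$ and $x$, handle $g$ by a direct diagonal computation, and resolve the $x$ case by invoking Lemma~\ref{a_rLemma}(ii) to match coefficients --- this is precisely how the paper proceeds. One slip to correct when you carry out the computation: with $\Delta(x)=x\ot 1+g\ot x$, the right-hand side of~(\ref{YD}) is $\rho(x\cdot u)(1\ot 1)+\rho(g\cdot u)(x\ot 1)$, not $\rho(x\cdot u)(x\ot 1)+\rho(g\cdot u)(gx\ot 1)$ as you wrote; with the correct expansion the two sides do reduce, coefficient by coefficient, to exactly the identity of Lemma~\ref{a_rLemma}(ii), as you anticipated.
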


\begin{proof} By Corollary \ref{comod2}, $A$ is a left $H$-comodule, and so Lemma \ref{genAH} 
will apply. We use that $A$ is generated as an algebra by the $H$-submodule $M = k\{1,u\}$ and
 $H$ is generated as an algebra by the set $\{ g, x\}$. Thus $A$ will be in ${^H_H{\CY \CD}}$ 
 provided we show the Yetter-Drinfeld condition (\ref{YD}) when $a = u$ and either $h = g$ or $h = x$. 

First assume $h = g$. Then, using $\rho(u)$ as in (\ref{coaction}), 
\begin{eqnarray*}
g\cdot \rho(u)&=&\sum_{i=0}^{n-1}a_igx^ig^{-(i+1)}\otimes g\cdot u^{i+1}\\
		     &=&\sum_{i=0}^{n-1}a_i\om^{-i}x^ig^{-i}\ot w^{i+1}u^{i+1}\\
		     &=&\sum_{i=0}^{n-1}\om a_ix^ig^{-i}\ot u^{i+1}.
\end{eqnarray*}

On the other hand, since $g\cdot u=\om u$ 
\begin{eqnarray*}
\rho(g\cdot u)(g\otimes 1)&=&  \om \left( \sum_{i=0}^{n-1}a_ix^ig^{-(i+1)}\ot u^{i+1}\right)(g\ot 1) \\
				        &=&\sum_{i=0}^{n-1}\om a_ix^ig^{-i}\ot u^{i+1}.	                                         
\end{eqnarray*}
Thus the Yetter-Drinfel'd condition holds for $g$ and $u$.

Now assume that $h = x$. First, since $\Delta (x)=x\ot 1+g\ot x$ and $x\cdot u = 1$, it is easy to see 
that $ x\cdot u^{i+1}=(\sum_{j=0}^i\om^j)u^i$. Thus in (\ref{YD}), 

\begin{eqnarray*}
x\cdot \rho (u)&=&\sum_{i=0}^{n-1}a_ixx^ig^{-(i+1)}\ot u^{i+1} + \sum_{i=0}^{n-1}a_igx^ig^{-(i+1)}\ot x\cdot u^{i+1}\\
		      &=&\sum_{i=0}^{n-1}a_ix^{i+1}g^{-(i+1)}\ot u^{i+1}+ \sum_{i=0}^{n-1}\om ^{-i}a_ix^ig^{-i}\ot (\sum_{j=0}^i\om ^j)u^i\\
		      &=&1\ot 1+\sum_{i=1}^{n-1}\left( (\sum_{j=0}^i\om^{j-i}) a_i+a_{i-1}\right)x^ig^{-i}\ot u^i.
\end{eqnarray*}

On the other hand,
\begin{eqnarray*}
\sum \rho(x_1 \cdot m) (x_2 \ot 1)&=&\rho(x\cdot u) (1\ot 1)+\rho(g\cdot u)(x\ot 1)\\
&=& \rho(1)(1\ot 1)+\om\left( \sum_{i=0}^{n-1}a_ix^ig^{-(i+1)}\ot u^{i+1}\right)(x\ot 1)\\
								&=&1\ot 1 + \om\sum_{i=0}^{n-1}\om^{i+1}a_ix^{i+1}g^{-(i+1)}\ot u^{i+1}\\
								&=& 1\ot 1 + \sum_{i=1}^{n-1}\om^{i+1}a_{i-1}x^ig^{-i}\ot u^i,
\end{eqnarray*}
where in both cases the term corresponding to $i=n$ vanishes because $x^n=0.$ 
Thus for $A$ to be a Yetter-Drinfel'd module algebra, we need that 
$$(\sum_{j=0}^i\om^{j-i}) a_i+a_{i-1}=\om^{i+1}a_{i-1}, \text{ for all } 1\leq i\leq n-1.$$
However this holds by Lemma \ref{a_rLemma} (ii). 
\end{proof}

\cite{CFM} also study when $A$ is commutative as an algebra in the category $^H_H{\CY \CD}$. 

Recall that for any braided monoidal category $\CC$, with braiding $\tau: V \ot W \to W \ot V$ for $V, W \in \CC$, an algebra $A$ in $\CC$ is {\it commutative in $\CC$} if for all $a, b \in A$, 
\begin{equation}\num \label{comm}m_A (a \ot b) = m_A \circ \tau (a \ot b).
\end{equation}

Several authors have considered this generalized commutativity. In particular Cohen and Westreich considered the case when $\CC$ is the module category of a quasi-triangular Hopf algebra  in \cite{CW}. 

In our situation $^H_H{\CY \CD}$ has the structure of a braided monoidal category, as follows: for two modules 
 $M, N \in ^H_H{\CY \CD}$, the braiding is given as follows \cite{Y}: 
$$\tau: M \ot N \to N \ot M \  \  \ \text{via} \ \ \  m \ot n \mapsto \rho(m) ( n \ot 1) = \sum (m_{-1} \cdot n) \ot m_0.$$ 

Thus an algebra $A$ in $^H_H{\CY \CD}$ is commutative in $^H_H{\CY \CD}$ if 
\begin{equation}\num 
ab = \sum (a_{-1} \cdot b) a_0.
\end{equation} 

\begin{cor} For the given algebra $A_n = k[u]$ and $H = T_{n^2}(\om)$, $A$ is 
commutative in $^H_H{\CY \CD}$, for any $n$.
\end{cor}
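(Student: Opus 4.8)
The plan is to verify the commutativity condition $ab = \sum (a_{-1}\cdot b)\,a_0$ directly on algebra generators and then extend by multiplicativity, since $A=k[u]$ is generated as an algebra by $u$ (together with the unit $1$, which is trivial). The key reduction is that it suffices to check the identity when both $a$ and $b$ are taken from a generating set, because both sides of (\ref{comm}) are compatible with the algebra and braided structure: the braiding $\tau$ is a morphism in $^H_H{\CY \CD}$ and $A$ is a commutative-candidate algebra object, so once commutativity holds on generators it propagates to products. Thus I would reduce to checking $u\cdot u = \sum (u_{-1}\cdot u)\,u_0$, i.e. $u^2 = \sum (u_{-1}\cdot u)\, u_0$.

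First I would write out the right-hand side explicitly using the coaction (\ref{coaction}), namely $\rho(u) = \sum_{i=0}^{n-1} a_i\, x^i g^{-(i+1)} \otimes u^{i+1}$, so that in Sweedler notation $u_{-1} = a_i\, x^i g^{-(i+1)}$ and $u_0 = u^{i+1}$. Then $\sum (u_{-1}\cdot u)\, u_0 = \sum_{i=0}^{n-1} a_i\, \bigl( x^i g^{-(i+1)}\cdot u\bigr)\, u^{i+1}$. Next I would compute the action of $x^i g^{-(i+1)}$ on $u$. Since $g$ acts on $u$ as $g\cdot u = \om u$ and $x$ acts as the computed lowering operator $x\cdot u = 1$, one finds $g^{-(i+1)}\cdot u = \om^{-(i+1)} u$ and then $x^i$ applied to a scalar multiple of $u$ vanishes for $i\geq 2$ (since $x\cdot u = 1$ is a scalar and $x\cdot 1 = \eps(x)1 = 0$), leaving only the terms $i=0$ and $i=1$ as potentially nonzero.

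The computation then collapses to a short finite check. The $i=0$ term contributes $a_0\,(g^{-1}\cdot u)\,u = a_0\,\om^{-1} u\cdot u$, and since $a_0 = 1$ this gives $\om^{-1}u^2$; the $i=1$ term contributes $a_1\,(x g^{-2}\cdot u)\,u^2 = a_1\,\om^{-2}(x\cdot u)\,u^2 = a_1\,\om^{-2}\cdot 1\cdot u^2$, and substituting $a_1 = (\om-1)\om$ from (\ref{a_i}) yields $(\om-1)\om^{-1}u^2$. Summing gives $\om^{-1}u^2 + (\om-1)\om^{-1}u^2 = \om^{-1}(1+\om-1)u^2 = u^2$, matching the left-hand side $ab = u^2$.

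The main obstacle I anticipate is the reduction step rather than the arithmetic: one must justify carefully that checking commutativity on the single generator pair $(u,u)$ suffices, which requires knowing that the braiding $\tau$ behaves well under the multiplication of the algebra object $A$ and that $A\in{}^H_H{\CY\CD}$ (already established in Proposition \ref{AYD}). This multiplicativity argument — essentially that the set of pairs $(a,b)$ satisfying (\ref{comm}) is closed under multiplying on the left or right by a generator, using that $A$ is an algebra in the braided category and that the braiding is a natural transformation — is standard but is the conceptual heart of the proof; the generator computation itself is immediate once this is in place.
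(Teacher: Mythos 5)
Your proposal is correct, but it takes a more self-contained route than the paper, which disposes of the corollary in one line by citing \cite[Prop 2.2(e)]{CFM}: that result shows that once $A_n$ is known to be an $H$-module $H$-comodule algebra lying in $^H_H{\CY\CD}$ (Proposition \ref{AYD} here), commutativity in the braided category follows, again by checking on generators. You instead carry out the generator computation explicitly, and your arithmetic is right: only the $i=0$ and $i=1$ terms of $\rho(u)$ survive since $x\cdot u=1$ and $x\cdot 1=\eps(x)1=0$, and $a_0\om^{-1}+a_1\om^{-2}=\om^{-1}+(\om-1)\om^{-1}=1$, so $\sum(u_{-1}\cdot u)u_0=u^2$ as required. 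What the paper's citation buys is precisely the reduction step you flag as the conceptual heart; what your version buys is an explicit, verifiable identity in place of an appeal to the literature. One refinement you should make to the reduction: naturality of the braiding shows that commutativity of $(aa',b)$ follows from commutativity of $(a',b)$ together with commutativity of $a$ against $a'_{-1}\cdot b$, i.e.\ against the whole $H$-submodule generated by $b$, not just against $b$ itself. So the correct generating set to check is the $H$-submodule $M=k\{1,u\}$ (exactly as in Lemma \ref{genAH}), meaning the pairs $(u,u)$, $(u,1)$, $(1,u)$, $(1,1)$; the last three are trivial because $\tau(a\ot 1)=1\ot a$ and $\tau(1\ot b)=b\ot 1$, so your single computation for $(u,u)$ does indeed complete the proof, but the statement ``check on the generator pair $(u,u)$'' needs this small amendment to be literally valid.
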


\begin{proof} It is shown in \cite[Prop 2.2(e)]{CFM} that if $A_n$ is in $^H_H{\CY \CD}$, then it is commutative in $^H_H{\CY \CD}$. In fact their argument uses only that $A_n$ is an $H$-module 
$H$-comodule algebra; again it suffices to check on generators of $A$ and of $H$.
\end{proof}


\section{The dual action}

In this section, for the sake of completeness, we sketch the approach of \cite{MS} for the action of 
$H^*$ on $A$. As noted in the introduction, it is shown there that $A$ is a $D(H)$-module algebra 
(and thus a Yetter-Drinfeld module algebra). 

The Taft Hopf algebras $H =  T_{n^2}(\om)$ are known to be self-dual; thus we may write 
\begin{equation}\num\label{taft*}
H^* = \Bk\langle G,X \vert\; G^n = \eps, X^n = 0, XG=\omega GX\rangle,
\end{equation}
where $\Delta(G)=G\otimes G$, $\Delta(X)= X\otimes \eps + G\otimes X$, 
$\lan G,1\ran =1$, and $\lan X,1\ran=\eps_{H^*}(X)=0$. 

The dual pairing between $H$ and $H^*$ is determined by 
\begin{equation}\num\label{pairing}
\lan G,g\ran= \omega^{-1},\;\lan G,x\ran =0,\; \lan X,g\ran= 0,\;{\rm and} 
\;\lan X,x\ran=1.
\end{equation}

\begin{lem} \label{gen}As an algebra, $D(H)$ is generated by $\{ x, g, X, G \}$. The relations among 
these generators, in addition to the relations in $H$ and $H^*$, are as follows:
$$ gG=Gg, \  \ xG = \om^{-1}Gx, \  \ Xg = \om^{-1} gX, \  \text{ and } \  xX-Xx = G-g.$$
\end{lem}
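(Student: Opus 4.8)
The plan is to compute the relations in $D(H)$ directly from the standard formula for the product in a Drinfel'd double. Recall that in $D(H)$ the two subalgebras $H$ and $H^*$ are embedded as subalgebras (here $H$ via $g,x$ and $H^*$ via $G,X$), so the relations internal to $H$ and internal to $H^*$ are already given in the excerpt. What remains is to determine the cross relations expressing how an element of $H^*$ commutes past an element of $H$. The multiplication in $D(H)$ (for the convention in which $D(H) = (H^*)^{cop} \bowtie H$, or the relevant version used in \cite{MS}) is governed by an identity of the form
\begin{equation*}
(f \bowtie h)(f' \bowtie h') = \sum f\, (h_1 \rhup f' \lhup S^{-1}h_3) \bowtie h_2 h',
\end{equation*}
so to get each cross relation I would specialize $f' \in \{G,X\}$ and $h \in \{g,x\}$ and evaluate the left and right hat-actions using the dual pairing (\ref{pairing}) together with $\Delta(g)=g\ot g$ and $\Delta(x)=x\ot 1+g\ot x$.

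Concretely, I would produce the four relations one at a time. First I would compute $gG$ versus $Gg$: since $g$ is grouplike and $G$ is grouplike, the conjugation action reduces to evaluating $\lan G,g\ran \lan G, g^{-1}\ran = \om^{-1}\om = 1$ (using that $\lan G, g^{-1}\ran = \lan G,g\ran^{-1}$ as $G$ is an algebra map on the grouplike $g$), which yields $gG=Gg$. Next, for $xG$ I would use $\Delta(x)=x\ot 1 + g\ot x$ to expand the adjoint action of $x$ on $G$; because $\lan G,x\ran = 0$ the only surviving term produces the scalar $\lan G,g\ran = \om^{-1}$ (in the appropriate slot), giving $xG=\om^{-1}Gx$. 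The relation $Xg=\om^{-1}gX$ is obtained symmetrically, using that $g$ acts on $X$ by the grouplike conjugation and $\lan X,g\ran=0$.

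The main obstacle — and the one relation genuinely requiring care — is the commutator $xX-Xx = G-g$. Here $X$ is not grouplike, so its coproduct $\Delta(X)=X\ot\eps + G\ot X$ enters, and one gets two nonvanishing contributions when pushing $x$ past $X$: one from the pairing $\lan X,x\ran = 1$ paired against the grouplike part and one from the antipode term $S^{-1}(x)$. I would carefully track the signs and the $\om$-powers coming from $S^{-1}(x) = -xg^{-1}$ (equivalently $S(x)=-g^{-1}x$), and collect terms. The expected cancellation is that the ``diagonal'' contributions combine to give exactly the difference of the two grouplikes $G$ and $g$ appearing from the $\eps$-slot and the $G$-slot of $\Delta(X)$, producing $xX-Xx=G-g$. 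Throughout, the one thing to be vigilant about is the precise convention for $D(H)$ used in \cite{MS} (whether it is $H^*$ or $(H^*)^{cop}$, and left versus right hat-actions), since this fixes whether the $\om$-exponents are $+1$ or $-1$; I would pin down the convention from the pairing (\ref{pairing}) and then let the computation dictate the signs, checking at the end that all four relations are mutually consistent (e.g. that the $x,X$ commutator is compatible with the grouplike relations under the algebra structure).
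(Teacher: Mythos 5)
The paper offers no proof of this lemma at all: it is imported from \cite{MS}, and the surrounding text only says ``one may check'' the related facts. So your direct verification from the multiplication rule of the double is supplying an argument the paper omits, and it is the right one. I checked your plan against the convention forced by the pairing (\ref{pairing}): with $(f \bt h)(f'\bt h') = \sum f\,(h_1 \rhup f' \lhup S^{-1}h_3)\bt h_2h'$ and $S^{-1}(x) = -xg^{-1}$, one finds $x \rhup G = 0$, $g \rhup G = \om^{-1}G$, $G \lhup xg^{-1} = 0$, which gives $xG = \om^{-1}Gx$; similarly $g \rhup X \lhup g^{-1} = \om X$ gives $Xg = \om^{-1}gX$, and the grouplike conjugation is trivial on $G$, giving $gG = Gg$. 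For the commutator, the three legs of $\Delta^{(2)}(x) = x\ot 1\ot 1 + g\ot x\ot 1 + g\ot g\ot x$ contribute $x \rhup X = G$, then $(g\rhup X)\bt x = X\bt x = Xx$, then $X \lhup (-xg^{-1}) = -\eps_H = -$ (identity of $H^*$) in the first slot against $g$ in the second, i.e.\ $-g$; summing gives $xX = G + Xx - g$, exactly the stated relation. So the cancellations you anticipate do occur, with the $G$ and the $-g$ coming from the outer legs of $\Delta^{(2)}(x)$ rather than from the two legs of $\Delta(X)$ as you suggest --- a harmless misattribution, but worth fixing when you write the computation out.

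One genuine omission: the lemma asserts not merely that these relations \emph{hold} but that they are \emph{the} relations, i.e.\ that $\{x,g,X,G\}$ with the relations of $H$, of $H^*$, and the four cross relations present $D(H)$. Your computation only establishes that the relations are satisfied. To close this you need the standard dimension count: the cross relations let you rewrite any word in normal form $X^aG^bx^cg^d$, so the abstractly presented algebra has dimension at most $n^4 = \dim H^*\cdot\dim H = \dim D(H)$, and since $D(H)$ is a quotient of it satisfying all the relations, the surjection is an isomorphism. Generation itself is immediate from $D(H) \cong H^*\ot H$ as vector spaces. With that paragraph added, your argument is complete.
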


One may check that  $(H^*)^{cop} = \Bk\langle  G^{-1}, XG^{-1} \rangle \subset D(H),$
and that these generators give the usual relations in $D(H)$. The generators given in Lemma \ref{gen} are used since 
$X$ and $x$ behave similarly when acting as skew derivations. \cite{MS} then use properties of higher skew derivations and the relations in Lemma \ref{gen} to prove:

\begin{thm} \cite[Theorem 4.5]{MS} \label{actions} Let $H = T_{n^2}(\om)$ be the Taft Hopf algebra and $H^*$ its dual as above. 
Then $A = A_n$ becomes a $D(H)$-module algebra via the following: 

(a) $g\cdot u = \om u$ and $G \cdot u = \om^{-1}u$, and 

(b) $x\cdot u = 1$ and $X \cdot u = ( \om^{-1} -1) u^2.$ 
\end{thm}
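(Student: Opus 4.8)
The plan is to construct the $D(H)$-action explicitly on the basis $\{1,u,\dots,u^{n-1}\}$ of $A$ by extending the prescribed values (a)--(b) on $u$ through the automorphism/Leibniz rules dictated by the coproducts of $D(H)$, and then to verify that the resulting four operators satisfy precisely the defining relations of $D(H)$ listed in Lemma \ref{gen}. Since $A=\Bk[u]$ is generated by the single element $u$ and the module-algebra compatibility $h\cdot(ab)=\sum (h_{(1)}\cdot a)(h_{(2)}\cdot b)$ is multiplicative, once the operators are shown to represent $D(H)$ the module-algebra property is automatic. The problem therefore reduces to two checks: that the operators are well defined on $A$, i.e.\ descend modulo $u^n-\om$; and that the relations of Lemma \ref{gen} hold as operator identities on $A$.

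First I would record closed formulas on the basis. As $\Delta(g)=g\ot g$ and $\Delta(G)=G\ot G$, both $g$ and $G$ act as algebra automorphisms, so $g\cdot u^k=\om^k u^k$ and $G\cdot u^k=\om^{-k}u^k$. As $\Delta(x)=x\ot 1+g\ot x$ and $\Delta(X)=X\ot\eps+G\ot X$, the elements $x$ and $X$ act as skew derivations, and solving the two resulting recursions from the given values yields $x\cdot u^k=(\sum_{j=0}^{k-1}\om^j)u^{k-1}$ (as already used in Section 5) and $X\cdot u^k=(\om^{-1}-1)(\sum_{j=0}^{k-1}\om^{-j})u^{k+1}$. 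Well-definedness is then immediate: each operator sends $u^n-\om$ to $0$ because $\sum_{j=0}^{n-1}\om^{\pm j}=0$, matching the value on the scalar $\om 1$. The relations internal to $H$ and to $H^*$ ($x^n=0$, $g^n=1$, $xg=\om gx$ and their $X,G$ analogues) follow at once from these formulas, as do the commuting-type cross relations $gG=Gg$, $xG=\om^{-1}Gx$ and $Xg=\om^{-1}gX$, each being a one-line check on $u^k$ using the diagonal action of $g$ and $G$.

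The substantive relation is the commutator $xX-Xx=G-g$, the identity that genuinely encodes the double rather than a tensor or smash product. Using the closed formulas, $(xX-Xx)\cdot u^k = (\om^{-1}-1)\big[(\sum_{j=0}^{k-1}\om^{-j})(\sum_{j=0}^{k}\om^{j}) - (\sum_{j=0}^{k-1}\om^{j})(\sum_{j=0}^{k-2}\om^{-j})\big]u^k$, and a telescoping simplification—$(\om^{-1}-1)\sum_{j=0}^{m-1}\om^{-j}=\om^{-m}-1$—collapses the bracket to $\om^{-k}-\om^k$, which is exactly $(G-g)\cdot u^k$. It is worth noting that this computation pins down the coefficient $\om^{-1}-1$ in (b): any other scalar would rescale the commutator and break the relation.

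The main obstacle I anticipate is precisely this commutator relation at arbitrary $k$, since it couples the two skew-derivation recursions and the tracking of powers of $\om$ is delicate; the telescoping identity above is what makes it clean. The route of \cite{MS} avoids even the general $k$: since $x$ and $X$ are \emph{higher skew derivations} of the same shape—skew derivations whose associated automorphisms $g,G$ act diagonally on $A$—each side of every relation is determined by its value on the generator $u$, so it suffices to verify the relations there, e.g.\ $(xX-Xx)\cdot u=(\om^{-1}-1)(1+\om)u=(\om^{-1}-\om)u=(G-g)\cdot u$. Once all relations of Lemma \ref{gen} are confirmed, $A$ is a $D(H)$-module; and because the operators were built from the coproducts to obey the Leibniz rule, $A$ is in fact a $D(H)$-module algebra, establishing (a) and (b).
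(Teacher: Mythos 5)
Your proposal is correct, but it is worth noting that the paper itself gives no proof of this statement: it is quoted verbatim from \cite[Theorem 4.5]{MS}, and the surrounding text only indicates that the proof there relies on the presentation of $D(H)$ in Lemma \ref{gen} together with the formalism of higher skew derivations. What you supply is therefore a self-contained, elementary alternative: you extend the four prescribed values to operators on the basis $\{u^k\}$ via the coproducts, check that each operator kills $u^n-\om$, and verify every relation of Lemma \ref{gen} directly, with the only nontrivial point being the commutator $xX-Xx=G-g$, which your telescoping identity $(\om^{-1}-1)\sum_{j=0}^{m-1}\om^{-j}=\om^{-m}-1$ reduces to $(\om^{-k}-1)\frac{\om^{k+1}-1}{\om-1}-(\om^{-(k-1)}-1)\frac{\om^{k}-1}{\om-1}=\om^{-k}-\om^{k}$; I checked this and it is right. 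Your approach buys transparency and independence from \cite{MS}, at the cost of a basis-by-basis computation; the \cite{MS} route buys a conceptual reduction to the single generator $u$. One caveat on your closing remark: the claim that ``each side of every relation is determined by its value on $u$'' is not automatic for the commutator relation, since neither $xX-Xx$ nor $G-g$ is a skew derivation in the naive sense (expanding $(xX-Xx)(ab)$ produces cross terms involving $gX$ and $Gx$), so the reduction to $u$ genuinely requires the higher-skew-derivation machinery of \cite{MS} rather than the simple ``two $\sigma$-derivations agreeing on generators agree everywhere'' principle. Since your main argument is the full computation at arbitrary $k$, this imprecision is not load-bearing, but if you wanted to keep only the check at $k=1$ you would need to justify that reduction carefully. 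Implicit in your argument, and worth stating, is that the relations listed in Lemma \ref{gen} together with those of $H$ and $H^*$ form a \emph{complete} presentation of $D(H)$ (a dimension count over a PBW-type basis gives this), since otherwise verifying those relations would not suffice to produce a $D(H)$-module.
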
 

To see that $A $ is an algebra in the category 
$^H_H{\CY \CD}$ of left, left Yetter-Drinfeld modules, one may use a theorem of Majid \cite{Mj} that $D(H)$-modules maybe be identified with $^H_H{\CY \CD}$-modules. The only difficulty remains in showing that dualizing the left $(H^*)^{cop}$-action in Theorem \ref{actions} to a left $H$-comodule action gives the desired coaction. 

\begin{thm} \cite[Theorem 5.7]{MS} Let $H = T_{n^2}(\om)$, $A = A_n$, and the $H$-action on $A$ 
be as described in Theorem \ref{actions}.  Then there is a unique left $H$-comodule algebra structure 
$\rho$ on $A$ such that $A$ is in $^H_H{\CY \CD}$, given by
$$
\rho(u)= \sum_{m=0}^{n-1}a_m x^m g^{-(m+1)}\otimes u^{m+1},
$$
where the coefficient $a_m$ is given by
$$
a_m= ((1-\omega^{-1})\omega)^m \omega^{\frac{m(m+1)}{2}}=(\om -1)^m \omega^{\frac{m(m+1)}{2}}.
$$
\end{thm}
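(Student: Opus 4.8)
The plan is to verify the two displayed formulas for $a_m$ in the comodule map $\rho$ and to establish that they agree with the coefficients $a_i$ defined in Equation~(\ref{a_i}), which are exactly the coefficients whose comodule property was proved in Theorem~\ref{main} and Corollary~\ref{comod2}. First I would observe that the two expressions for $a_m$ in the statement are trivially equal: since $(1-\om^{-1})\om = \om - 1$, we have $((1-\om^{-1})\om)^m \om^{m(m+1)/2} = (\om-1)^m \om^{m(m+1)/2}$, which is precisely $a_m$ as defined in Equation~(\ref{a_i}). Thus the claimed $\rho$ is literally the coaction of Equation~(\ref{coaction}), and Corollary~\ref{comod2} already tells us it is a left $H$-comodule algebra structure, while Proposition~\ref{AYD} tells us $A$ with this $\rho$ lies in $^H_H{\CY\CD}$. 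So the content of the theorem is \emph{uniqueness}: that this is the only comodule structure compatible with the given $D(H)$-action.

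The core of the argument is the duality dictionary of Majid~\cite{Mj}: a $D(H)$-module structure on $A$ corresponds bijectively to a Yetter-Drinfeld module structure, i.e.\ to a pair (left $H$-action, left $H$-comodule coaction $\rho$) satisfying the compatibility~(\ref{YD}). Under this correspondence the left $H$-comodule coaction $\rho$ is obtained by \emph{dualizing} the left $(H^*)^{cop}$-action. Concretely, since $A$ has basis $\{u^{i+1}\}$ and $\rho(u)$ must have the form $\sum_m c_m\, x^m g^{-(m+1)} \ot u^{m+1}$ (the $g$-exponent being forced by the grading/eigenvalue bookkeeping from part (a) of Theorem~\ref{actions}, namely $g\cdot u = \om u$), the coefficients $c_m$ are determined by pairing against the dual basis. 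Thus I would write $\rho(u) = \sum_m \lan X^{(m)}, -\ran$-type coefficients and read off $c_m$ from the action of the generators $X, G$ on $u$ given in Theorem~\ref{actions}(b).

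The key computation is therefore to show $c_m = a_m$. I would set this up recursively. From $X \cdot u = (\om^{-1}-1)u^2$ and the iterated action of $X$ (using the higher skew-derivation relations that come out of Lemma~\ref{gen}, in particular $xX - Xx = G - g$ and $XG = \om GX$), one extracts how $X^m$ acts on $u$, and the pairing~(\ref{pairing}) with $\lan X, x\ran = 1$, $\lan G, g\ran = \om^{-1}$ then pins down each $c_m$. The recursion should produce a relation of the form $c_m = (\text{factor involving }\om) \cdot c_{m-1}$ whose solution, with the initial condition $c_0 = 1$, is exactly $(\om-1)^m \om^{m(m+1)/2}$. This matching of the recursion to the closed form $a_m$ is the main obstacle: it requires tracking the $\om$-power bookkeeping carefully through the noncommutative relations, and it is essentially where the combinatorial content of the $\frac{m(m+1)}{2}$ exponent enters. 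Once $c_m = a_m$ is established, uniqueness follows because the comodule structure in $^H_H{\CY\CD}$ compatible with a fixed $D(H)$-action is unique by Majid's equivalence, and the explicit form of $\rho(u)$ determines $\rho$ on all of $A$ since $A = \Bk[u]$ and $\rho$ is an algebra map.
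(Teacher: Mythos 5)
Your first paragraph is sound and, within this paper, is the right observation: since $(1-\om^{-1})\om=\om-1$, the coefficients in the statement are exactly the $a_m$ of Definition (\ref{a_i}), so the claimed $\rho$ is the coaction (\ref{coaction}), and the existence of the comodule algebra structure and of the Yetter--Drinfel'd structure are Corollary \ref{comod2} and Proposition \ref{AYD}. Be aware, though, that the paper itself does not prove this theorem: Section 6 is an expository sketch and the statement is quoted from \cite[Theorem 5.7]{MS}; the only verification the paper performs is precisely the coefficient identification you make, in order to conclude that \cite{MS} gives an \emph{alternate} proof of Corollary \ref{comod2} and Proposition \ref{AYD}. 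Your proposal runs that implication in the opposite direction, which is legitimate for existence but, as you say, leaves uniqueness as the real content.

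It is in the uniqueness argument that the genuine gaps lie. First, you assert that $\rho(u)$ ``must have the form'' $\sum_m c_m x^m g^{-(m+1)}\ot u^{m+1}$; a priori $\rho(u)$ is an arbitrary element of $A\ot H$, i.e.\ a sum over all monomials $x^ag^b\ot u^i$, and forcing this triangular shape is itself a computation (e.g.\ from the compatibility (\ref{YD}) with $h=g$ together with counitality), which you do not carry out. Second, the recursion that is supposed to yield $c_m=(\om-1)^m\om^{m(m+1)/2}$ is never derived --- you yourself flag it as ``the main obstacle'' --- so the argument stops exactly where the exponent $\frac{m(m+1)}{2}$ has to be produced. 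Third, the appeal to Majid's equivalence gives uniqueness of the coaction attached to a \emph{fixed $D(H)$-action}, whereas the theorem fixes only the $H$-action (the action of $g$ and $x$); to conclude uniqueness one must also show that the $G$- and $X$-actions extending it to $D(H)$ (equivalently, the coaction satisfying (\ref{YD})) are forced by the relations of Lemma \ref{gen}, for instance that $xX-Xx=G-g$ applied to $u$ pins down $X\cdot u$ and $G\cdot u$. That step is assumed rather than proved. None of these gaps is unfillable --- they are all handled in \cite{MS} --- but as written the uniqueness half of your argument is a plan rather than a proof.
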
 

This coefficient $a_m$ is exactly our coefficient in Definition (\ref{a_i}), and so the coaction in (6.5) 
is exactly our coaction in Equation (\ref{coaction}). Thus \cite[Theorem 5.7]{MS} gives an alternate 
proof of Corollary \ref{comod2} and Proposition \ref{AYD}.

\vs

\begin{center}
ACKNOWLEDGMENT
\end{center}
The authors wish to thank Jason Fulman for suggesting the proof of 
Theorem \ref{comain}.


\end{document}